\def\la{\left\langle}
\def\ra{\right\rangle}
\newcommand{\km}{k\mbox{-}\min}
\newcommand{\kma}{k\mbox{-}\max}
\newcommand{\nkma}{(n-k+1)\mbox{-}\max}
\newcommand{\PP}{\mathbb{P}}
\newcommand{\R}{\mathbb{R}}
\newcommand{\E}{\mathbb{E}}
\newtheorem{theo}{Theorem}[section]
\newtheorem{lem}[theo]{Lemma}
\theoremstyle{definition}
\newtheorem{rem}[theo]{Remark}
\let\expandafter\oldproof\csname\string\proof\endcsname
\let\oldendproof\endproof
\renewenvironment{proof}[1][\proofname]{%
  \oldproof[\bf #1]%
}{\oldendproof}
\date{}
\title{Estimates for order statistics in terms of quantiles}
\author{Alexander E. Litvak and Konstantin Tikhomirov}
\begin{document}

\maketitle

\begin{abstract}
Let  $X_1, \ldots, X_n$ be  independent non-negative random variables with
cumulative distribution functions $F_1,F_2,\ldots,F_n$, each satisfying
certain (rather mild) conditions. We show that the median of $k$-th smallest
order statistic of the vector $(X_1, \ldots, X_n)$ is equivalent to
the quantile of order $(k-1/2)/n$ with respect to the averaged distribution
$F=\frac{1}{n}\sum_{i=1}^n F_i$.
\end{abstract}

{\small \bf AMS 2010 Classification:}
{\small 62G30, 60E15}

{\small {\bf Keywords:} Order statistics, INID case}

\section{Introduction}

The goal of this note is to provide sharp estimates for order statistics of independent, not necessarily
identically distributed random variables, whose distributions satisfy certain (rather mild) conditions.
Order statistics are among very important objects in probability and statistics with many applications.
We refer to \cite{AN, BCo, DN} and references therein for information on the subject, especially in the case
of i.i.d.\ random variables.
The case of independent but not identically distributed random variables is less studied, we refer to \cite[Chapter~5]{DN}
for some results in this direction.
Understanding this setting is important in some applications, for example in connection with
the Mallat--Zeitouni problem \cite{MZ, LTik},
the study of asymptotic behaviour of some classes normed spaces \cite{GLSW-Bull},
some problems in reconstruction \cite{GLMP}, to name a few.

Given $1\leq k\leq n$ and a sequence of real numbers $a_1, a_2, \ldots, a_n$, let $\km _{i\leq n}a_{i}$ and
$\kma _{i\leq n}a_{i}$ denote its $k$-th smallest and $k$-th  largest elements, in particular,
$$
  \km _{i\leq n}a_{i} = \nkma _{i\leq n}a_{i}.
$$

Let $F$ be cdf (cumulative distribution function) of a non-negative random variable. We employ the following condition:
\begin{equation}\label{cdf specific conditions}
\mbox{{\it there exists  $\, K>1\, $ such that $\, \frac{F(Kt)}{1-F(Kt)}\geq \frac{2F(t)}{1-F(t)}\, \, $ for all $\, t>0$}}
\end{equation}
(see the next section for discussion and examples).

\medskip

The main result of this note, Theorem~\ref{median estimation indep}, states
that given $K>1$, $1\leq k\leq n$, and independent non-negative random variables $X_1, \ldots, X_n$
with cdf's $F_1,F_2,\ldots,F_n$, each satisfying condition
\eqref{cdf specific conditions} with parameter $K$, one has
$$
  K^{-10}\,  q_F\left(\frac{k-1/2}{n}\right) \leq  \mbox{\rm Med}\Big( \km _{1\leq i\leq n}X_i \Big)
  \leq K^{13}\,  q_F\left(\frac{k-1/2}{n}\right),
$$
where $q_F(t)$ is the quantile of order $t$ with respect to the averaged distribution $F=\frac{1}{n}\sum_{i=1}^n F_i$.

This result improves and complements the results from \cite{GLSW-CRAS, GLSW-PAMS, GLSW-P}, where, under
somewhat stronger conditions on distributions, the authors proved estimates for the corresponding
expectations up to a factor logarithmic in $k$. More precisely, in \cite{GLSW-CRAS, GLSW-PAMS} it was shown that
given
$\alpha, \beta, p >0$, $1\leq k\leq n$, real numbers $0<x_1\leq x_2\leq \ldots \leq x_n$, and independent
random variables $\xi _1, \dots, \xi_n$ satisfying
$$
 \forall t>0 \quad  \PP  \left(|\xi | \leq t \right) \leq  \alpha t \quad \mbox{ and } \quad
  \PP  \left(|\xi | > t \right) \leq e^{-\beta t}
$$
one has
$$
   \frac{1}{2^{1/p}\, 4\, \alpha}\,  \max_{1 \leq j \leq k}\ \frac{k+1-j}{
   \sum_{i=j}^n 1/x_i} \leq
   \left(\E\, \km_{1\leq i\leq n} |x_{i}\xi _{i}|^p \right)^{1/p} \leq
  C (p, k)   \, \beta ^{-1} \, \max_{1 \leq j \leq k} \
  \frac {k+1-j}{\sum_{i=j}^n 1/x_i } ,
$$
where $C(p, k) := C \max\{ p,  \ln (k+1) \}$,
and $C$ is an absolute positive constant. In \cite{GLSW-P} this was extended further
to a larger class of distributions, namely
it was shown that the expectation above is equivalent to some Orlicz
norm of the sequence $(1/x_i)_i$, again up to a factor logarithmic in $k$.

We would also like to mention that
order statistics of random vectors with independent but not identically distributed coordinates
were studied in \cite{S70}, where a result of Hoeffding \cite{H56}
was used, in particular, to estimate the difference between the median of
$\km _{1\leq i\leq n}(X_i)$ and
the median of the $k$-th order statistic of a random vector with i.i.d.\ coordinates
distributed according to the law $F=\frac{1}{n}\sum_{i=1}^n F_i$
(see also \cite[pp 96--97]{DN}).
However, the results of \cite{S70} do not seem to directly
imply the relations which we prove in Theorem~\ref{median estimation indep}.

\section{Notation and preliminaries}
\label{secone}

Given a subset $A\subset \mathbb N$, we denote its cardinality by $|A|$.
Next, for a natural number $n$ and a set $E\subset \{1,2,\dots,n\}$,
we denote by $E^c$ the complement of $E$ inside $\{1,2,\dots,n\}$.
Similarly, for an event ${\mathcal E}$ we denote by ${\mathcal E}^c$ the complement of the event.
Further, we say that a collection of sets $(A_j)_{j\leq k}$ is a partition of $\{1,2,\dots,n\}$ if
each $A_j$ is non-empty, the sets are pairwise disjoint and their union is $\{1,2,\dots,n\}$.
The canonical Euclidean norm and the canonical inner product in
$\R^n$ will be denoted by $|\cdot|$ and $\la \cdot, \cdot \ra $, respectively.
We adopt the conventions $1/0=\infty$ and $1/\infty=0$ throughout the text.

Let $\xi$ be a real-valued random variable.
As usual, we use the abbreviation cdf for the cumulative distribution function
(that is, the cdf of $\xi$ is $F_{\xi}(t)=\PP(\xi \leq t)$).
Given $r\in [0,1]$, by $q(r)=q_F(r)=q_{\xi} (r)$ we denote a quantile
of order $r$, that is a number satisfying
$$
   \PP\left\{\xi<q(r)\right\} \leq r \quad \mbox{ and } \quad
   \PP\left\{\xi \leq q(r)\right\} \geq r
$$
(note that in general $q(r)$ is not uniquely defined).

Now we discuss our main condition on the distributions, the condition (\ref{cdf specific conditions}).
Clearly, if the cdf of a non-negative random variable $\xi$ satisfies condition \eqref{cdf specific conditions}
with some $K$ then for every $x>0$ the cdf of $x \xi$ satisfies \eqref{cdf specific conditions} with the same
$K$.
Note that  \eqref{cdf specific conditions} is equivalent to
\begin{equation}\label{cdf-cond-3}
\mu\bigl((t, Kt]\bigr) \geq \mu\bigl([0, t]\bigr)\, \mu\bigl((Kt, \infty)\bigr),\quad t>0,
\end{equation}
where $\mu$ is the probability measure on $\R$ (actually, on $\R_+$) induced by $F$.
It is not difficult to see that the uniform distribution on $[0,1]$ satisfies the condition
\eqref{cdf specific conditions} with $K=2$. Another example of a random variable satisfying
\eqref{cdf specific conditions} (with $K=2^{1/p}$) is a random variable $\xi$ taking values
in $[1, \infty)$ with $\PP(\xi \geq 1)=1/t^p$, $t\geq 1$, where $p>0$ is a fixed parameter.
Next we show that the absolute value of any log-concave random variable
satisfies~\eqref{cdf specific conditions}.
In particular, this includes Gaussian and exponential distributions.

\begin{lem}\label{l-c-cond}
Let $\eta$ be a log-concave variable. Then the cdf of $|\eta|$ satisfies~\eqref{cdf specific conditions} with $K=3$.
\end{lem}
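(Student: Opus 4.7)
The claim reduces, via~\eqref{cdf-cond-3}, to showing that for every $t>0$,
\[
\mu((t,3t])\;\ge\; \mu([0,t])\cdot\mu((3t,\infty)),
\]
where $\mu$ denotes the law of $|\eta|$. Writing $\nu$ for the (log-concave) law of $\eta$ on $\R$ and setting $A=[-t,t]$, $C=[-3t,3t]^{c}$, $F(t)=\nu(A)$, this is in turn equivalent to $\nu(C)\le (1-F(t))/(1+F(t))$.

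The main tool I would use is the Pr\'ekopa--Leindler inequality applied to the log-concave density of $\eta$. A direct computation of the Minkowski midpoint gives $\tfrac12 A+\tfrac12 C=A^{c}$: if $c>3t$ and $a\in[-t,t]$ then $(a+c)/2\in(t,\infty)$, and symmetrically when $c<-3t$. Pr\'ekopa--Leindler with $\lambda=1/2$ therefore yields $(1-F(t))^{2}\ge F(t)\,\nu(C)$, and since $(1-F(t))^{2}/F(t)\le (1-F(t))/(1+F(t))$ rearranges to $F(t)^{2}+F(t)\ge 1$, this single estimate already closes the argument on the range $F(t)\ge (\sqrt{5}-1)/2$.

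For $F(t)<(\sqrt{5}-1)/2$ the symmetric bound above is too crude and I would sharpen it by applying Pr\'ekopa--Leindler one side of zero at a time: with $B_{+}=(3t,\infty)$, the midpoint $\tfrac12 A+\tfrac12 B_{+}$ equals $(t,\infty)$, giving $\nu((t,\infty))^{2}\ge F(t)\,\nu((3t,\infty))$, and analogously on the negative side. Summing these two bounds improves on the symmetric one by the cross-term $2\,\nu((t,\infty))\,\nu((-\infty,-t))/F(t)$.

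The main obstacle is showing that this cross-term saving is large enough on the remaining range. I would treat two sub-cases separately. When $\eta$ is almost surely of a single sign, $|\eta|$ itself has a log-concave law, so $1-F$ is log-concave with $(1-F)(0)=1$; this yields the stronger tail bound $1-F(3t)\le (1-F(t))^{3}$, and the required inequality reduces to the elementary fact that $x^{3}-2x^{2}+1\ge 0$ for $x\in[0,1]$. When instead both half-lines carry positive mass, the continuity and log-concavity of the density of $\eta$ at $0$ force a quantitative lower bound on $\nu((t,\infty))\,\nu((-\infty,-t))$ in terms of $F(t)$, ruling out the "nearly one-sided'' pathology and closing the inequality in this regime as well.
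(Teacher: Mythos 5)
Your opening moves are sound and your target reformulation ($\nu(C)\le (1-F(t))/(1+F(t))$) is correct, but the argument has a genuine gap in the last sub-case that I don't see how to close.

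The claim that when both half-lines carry mass, ``continuity and log-concavity of the density at $0$ force a quantitative lower bound on $\nu((t,\infty))\,\nu((-\infty,-t))$ in terms of $F(t)$'' is false. Take $\eta$ to be a shifted exponential with density $f(x)=e^{-(x+a)}\mathds{1}_{\{x\ge -a\}}$ for a small $a>0$, and choose $t>a$, say $a=0.1$, $t=0.2$. Then $\eta$ charges both half-lines, so you are in your sub-case (ii); yet $\nu((-\infty,-t))=0$ and $F(t)=1-e^{-(t+a)}\approx 0.26<(\sqrt5-1)/2$, so the cross-term $\nu((t,\infty))\,\nu((-\infty,-t))$ vanishes and no lower bound of the advertised kind can exist. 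At the same time you cannot fall back on the other sub-case, since $|\eta|$ need not have a log-concave law when $\eta$ is not a.s.\ one-signed (e.g.\ a Gaussian shifted far from $0$), so the bound $1-F(3t)\le(1-F(t))^3$ is not available either. Thus after the symmetric and one-sided Pr\'ekopa--Leindler steps (which are both correct), the proof does not cover all distributions on the range $F(t)<(\sqrt5-1)/2$.

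The gap stems from the choice of Minkowski decomposition. You use $\tfrac12[-t,t]+\tfrac12(3t,\infty)=(t,\infty)$, which gives $\nu((t,\infty))^2\ge F(t)\,\nu((3t,\infty))$. The paper instead uses the strictly better identity
\[
(t,\infty)=\tfrac12[-t,\infty)+\tfrac12(3t,\infty),
\]
which by log-concavity gives
\[
\nu((t,\infty))^2\ \ge\ \nu([-t,\infty))\,\nu((3t,\infty))\ =\ \bigl(F(t)+\nu((t,\infty))\bigr)\,\nu((3t,\infty)).
\]
Rearranging yields $\nu((t,\infty))\bigl(\nu((t,\infty))-\nu((3t,\infty))\bigr)\ge F(t)\,\nu((3t,\infty))$, i.e.\ a one-sided version of the target inequality, valid for \emph{all} values of $F(t)$ with no threshold. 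Writing the mirror inequality on the negative half-line and adding the two (using $\nu((t,\infty))\le 1$, $\nu((-\infty,-t))\le 1$) gives $\mu((t,3t])\ge\mu([0,t])\,\mu((3t,\infty))$ directly, with no case split. I'd suggest replacing $[-t,t]$ by $[-t,\infty)$ in your midpoint identity; it removes both the threshold on $F(t)$ and the problematic one-sided/two-sided dichotomy.
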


The lemma is an immediate consequence of the following statement
and the fact that conditions \eqref{cdf specific conditions} and \eqref{cdf-cond-3} are equivalent.

\begin{lem}
Let $\mu_0$ be a non-degenerate log-concave probability measure on $\R$ and let $t>0$. Then
$$\mu_0\bigl((t, \infty)\bigr)\, \mu_0\bigl((t, 3t]\bigr) \geq \mu_0\bigl([-t, t]\bigr)\,  \mu_0\bigl((3t, \infty)\bigr)$$
and
$$\mu_0\big((-\infty, -t)\big)\,  \mu_0\big([-3t, -t)\big) \geq    \mu_0\big([-t, t]\big) \,  \mu_0\bigl((-\infty, -3t)\bigr).$$
In particular, we have
\begin{align*}
\mu\big((t,3t]\big)=\mu_0\big([-3t,-t)\cup (t,3t]\big)&\geq \mu_0\bigl([-t, t]\bigr)\,\mu_0\bigl((-\infty, -3t)\cup (3t,\infty)\bigr)\\
&=\mu\bigl([0, t]\bigr)\,\mu\bigl((3t,\infty)\bigr),
\end{align*}
where $\mu$ is defined by $\mu(S):=\mu_0(-S\cup S)$, $S\subset\R_+$.
\end{lem}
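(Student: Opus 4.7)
The plan is to reduce each of the two displayed inequalities to one-dimensional log-concavity of, respectively, the survival function and the CDF of $\mu_0$. Since $\mu_0$ is a non-degenerate log-concave probability measure on $\R$, Borell's structure theorem supplies a log-concave density $f$; in particular $\mu_0$ has no atoms. A standard consequence of the Pr\'ekopa--Leindler inequality (marginalization of the joint log-concave function $(u,s)\mapsto f(s)\,\mathbf{1}_{\{s>u\}}$ on $\R^2$ in the variable $s$) is then that both
$$\bar F(s):=\mu_0\bigl((s,\infty)\bigr) \quad\text{and}\quad G(s):=\mu_0\bigl((-\infty,s)\bigr)$$
are log-concave functions on $\R$.

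For the first inequality, I would use absence of atoms to rewrite
$\mu_0([-t,t])=\bar F(-t)-\bar F(t)$ and $\mu_0((t,3t])=\bar F(t)-\bar F(3t)$.
Substituting these into the two sides and cancelling the common $\bar F(t)\bar F(3t)$ term, the inequality reduces to
$$\bar F(t)^2 \;\geq\; \bar F(-t)\,\bar F(3t),$$
which is precisely log-concavity of $\bar F$ at the midpoint $t=\tfrac12(-t+3t)$. The second inequality is handled identically via $G$: writing $\mu_0([-t,t])=G(t)-G(-t)$ and $\mu_0([-3t,-t))=G(-t)-G(-3t)$, the same algebraic cancellation reduces it to $G(-t)^2\geq G(-3t)\,G(t)$, which is log-concavity of $G$ at the midpoint $-t=\tfrac12(t+(-3t))$.

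For the ``in particular'' conclusion, I would note that each of the two displayed inequalities can be weakened, using $\mu_0((t,\infty))\leq 1$ and $\mu_0((-\infty,-t))\leq 1$, to
$\mu_0((t,3t])\geq \mu_0([-t,t])\,\mu_0((3t,\infty))$ and $\mu_0([-3t,-t))\geq\mu_0([-t,t])\,\mu_0((-\infty,-3t))$;
summing these and invoking the definition $\mu(S)=\mu_0(-S\cup S)$ yields the stated inequality $\mu((t,3t])\geq \mu([0,t])\,\mu((3t,\infty))$. No substantive obstacle arises in this approach; the only steps worth highlighting explicitly in the writeup are the absence of atoms and the log-concavity of $\bar F$ and $G$, both of which are standard properties of non-degenerate log-concave measures on the real line.
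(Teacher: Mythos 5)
Your proof is correct and its algebraic core coincides with the paper's: the key reduction in both cases is $\mu_0\bigl((t,\infty)\bigr)^2\geq \mu_0\bigl([-t,\infty)\bigr)\,\mu_0\bigl((3t,\infty)\bigr)$ with $t=\tfrac12(-t)+\tfrac12(3t)$, followed by the same rearrangement. The only presentational difference is that the paper applies log-concavity of $\mu_0$ directly to the half-lines via the Minkowski decomposition $(t,\infty)=\tfrac12[-t,\infty)+\tfrac12(3t,\infty)$, using the closed endpoint $[-t,\infty)$ so that $\mu_0\bigl([-t,\infty)\bigr)=\mu_0\bigl([-t,t]\bigr)+\mu_0\bigl((t,\infty)\bigr)$ holds without any reference to atoms, whereas you route through log-concavity of $\bar F$ and $G$ and therefore need the (standard, and correctly invoked) absolute continuity of a non-degenerate log-concave measure; both are valid and essentially equivalent.
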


\begin{proof}
We prove the first inequality only, the second one is similar.
Note that
$$
    (t, \infty) =  \tfrac{1}{2} \, [-t, \infty) + \tfrac{1}{2} \, (3t, \infty) .
$$
By log-concavity of $\mu_0$ this implies
$$
  {\mu_0} ^2 \big((t, \infty)\big) \geq \mu_0\big([-t, \infty)\big)\,  \mu_0\big((3t, \infty)\big) =
  \big(\mu_0\big([-t, t]\big) + \mu_0\big((t, \infty)\big)\big)\,  \mu_0\big((3t, \infty)\big).
$$
Thus
$$
  \mu_0\big((t, \infty)\big)\, \left( \mu_0  \big((t, \infty)\big) - \mu_0\big((3t, \infty)\big)\right) \geq
 \mu_0\big([-t, t]\big) \, \mu_0\big((3t, \infty)\big),
$$
which implies the result.
\end{proof}

\begin{rem}
We would also like to notice that \eqref{cdf specific conditions} implies that
$$
   F(t) \geq 2 F(t/K^2),\quad \quad  \mbox{ whenever }\quad \quad  F(t)\leq 1/2.
$$
This (weaker) assumption on $F$ was employed in \cite{LTik}.
\end{rem}

\section{Main result}
\label{mainth}

In this section we prove our main result, stating that medians of order statistics in case
of independent components are equivalent to corresponding quantiles of an averaged distribution.

\begin{theo}\label{median estimation indep}
Let $K>1$ and $k\leq n$.
Let  $X_1, \ldots, X_n$ be  independent non-negative random variables
with cdf's $F_1,F_2,\ldots,F_n$, each satisfying condition
\eqref{cdf specific conditions} with parameter $K$. Set $F=\frac{1}{n}\sum _{i=1}^n F_i$.
Then for  $0<t<K^{-5}$ one has
$$
  \PP\Bigl\{\km _{1\leq i\leq n} X_i< t\, q_F\Big(\frac{k-1/2}{n}\Big) \Bigr\} \leq
   4\, t^{1/(4 \ln K)},
$$
and for $t>K^5$ one has
$$
  \PP\Bigl\{\km _{1\leq i\leq n} X_i> t\, q_F\Big(\frac{k-1/2}{n}\Big) \Bigr\} \leq
   4\, t^{-1/(6 \ln K)}.
$$
In particular,
$$
  K^{-10}\,  q_F\left(\frac{k-1/2}{n}\right) \leq  \mbox{\rm Med}\Big( \km _{1\leq i\leq n}X_i \Big)
  \leq K^{13}\,  q_F\left(\frac{k-1/2}{n}\right).
$$
\end{theo}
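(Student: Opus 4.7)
The median bounds follow immediately from the two tail estimates: substituting $t=K^{-10}$ in the lower-tail bound yields $\PP(\km_{1\leq i\leq n}X_i<K^{-10}q_F)\le 4e^{-5/2}<\frac12$, while $t=K^{13}$ in the upper-tail bound yields $\PP(\km_{1\leq i\leq n}X_i>K^{13}q_F)\le 4e^{-13/6}<\frac12$; together these confine the median to $[K^{-10}q_F,K^{13}q_F]$. So it suffices to prove the two tail estimates, and I focus on the lower one. Write $q:=q_F((k-1/2)/n)$, so the quantile definition gives $nF(q^-)\le k-1/2\le nF(q)$, and let $N(s):=|\{i\le n:X_i<s\}|$, so that $\{\km_{1\leq i\leq n}X_i<s\}=\{N(s)\ge k\}$. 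The essential probabilistic step is to apply Markov to $\binom{N(tq)}{k}$ rather than to $N(tq)$:
\[
\PP\bigl(N(tq)\ge k\bigr)\;\le\;\E\binom{N(tq)}{k}\;=\;e_k\bigl(F_1(tq^-),\dots,F_n(tq^-)\bigr),
\]
where $e_k$ is the $k$-th elementary symmetric polynomial. Naive Markov on $N$ only yields $\PP\le(k-1/2)/k$, which does not decay with $t$; but $e_k$ is sensitive to non-uniformity in its arguments and can be genuinely small.

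To control each factor, I iterate condition \eqref{cdf specific conditions}. With $m:=\lfloor\log_K(1/t)\rfloor\ge 5$, iteration yields $G_i(tq)\le 2^{-m}G_i(q^-)$ for $G_i:=F_i/(1-F_i)$, giving
\[
F_i(tq)\;\le\;\frac{F_i(q^-)}{2^m\bigl(1-F_i(q^-)\bigr)+F_i(q^-)}
\]
whenever $F_i(q^-)<1$. I partition indices into three classes by $F_i(q^-)$: (a) $F_i(q^-)\le 1/2$, where the display yields $F_i(tq)\le 2^{-m+1}F_i(q^-)$; (b) a dyadic partition $I_j:=\{i:1-F_i(q^-)\in[2^{-j-1},2^{-j})\}$ of $\{i:1/2<F_i(q^-)<1\}$, where $F_i(tq)\le 2^{j+1-m}$; and (c) the ``saturated'' class $\{i:F_i(q^-)=1\}$, of cardinality at most $k-1$ because $nF(q^-)\le k-1/2$. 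Expanding $e_k$, any contributing $k$-subset must meet class (a) or (b) (since (c) has fewer than $k$ members), and that intersection supplies a factor $\lesssim t^{(\ln 2)/\ln K}$; the class-(c) factors contribute at most $1$ each. The quantile constraint $\sum F_i(q^-)\le k-1/2$ further gives $|I_j|\le 2k$, and a careful dyadic summation collapses the bound to the claimed $4t^{1/(4\ln K)}$, the slight weakening of exponent absorbing the combinatorial overhead.

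The upper tail is obtained by the mirror-image argument: Markov on $\binom{\bar N(tq)}{n-k+1}$ with $\bar N(s):=|\{i:X_i>s\}|$ gives a bound via $e_{n-k+1}\bigl(1-F_1(tq^+),\dots,1-F_n(tq^+)\bigr)$; condition \eqref{cdf specific conditions} in the equivalent form $H_i(Kt)\le H_i(t)/2$ (with $H_i:=(1-F_i)/F_i$) yields $1-F_i(tq)\le H_i(q^+)/2^{m'}$ for $m':=\lfloor\log_K t\rfloor\ge 5$; and a dyadic partition by $F_i(q^+)$ (small vs.\ large), combined with the quantile constraint $nF(q^+)\ge k-1/2$ which bounds the complementary ``saturated'' set $\{i:F_i(q^+)=0\}$, yields the bound $4t^{-1/(6\ln K)}$. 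The slightly weaker exponent $1/(6\ln K)$ reflects the larger number of factors in $e_{n-k+1}$.

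\textbf{Main obstacle.} The principal difficulty is that condition \eqref{cdf specific conditions} supplies \emph{no} pointwise information about $F_i(tq)$ at the saturated indices. The resolution pivots on the fact that the quantile forces their number to be at most $k-1$, exactly one below what would make the event $\{N(tq)\ge k\}$ certain; this is precisely why Markov must be applied to $\binom{N}{k}$, since this combinatorial refinement ensures at least one controlled (non-saturated) factor enters each term in the expansion of $e_k$, supplying the polynomial-in-$t$ decay. Carefully balancing the dyadic cardinalities $|I_j|$ against the pointwise bounds $F_i(tq)\le 2^{j+1-m}$ to extract the clean exponent $1/(4\ln K)$ constitutes the main technical work.
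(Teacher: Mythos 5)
Your proposal takes a genuinely different route from the paper's: you bound $\PP(N(tq)\ge k)$ by the $k$-th elementary symmetric polynomial $e_k(F_1(tq),\dots,F_n(tq))$ via Markov on $\binom{N}{k}$ (a $k$-th factorial-moment argument), whereas the paper uses a second-moment (Chebyshev) argument via the Bernoulli concentration lemma, applied separately on the two classes $I=\{F_i(q)<1-\gamma\}$ and $I^c=\{F_i(q)\ge 1-\gamma\}$ with the single threshold $\gamma=2^{-\ell/2}$. The paper's two-class split is considerably simpler than your three-class dyadic decomposition, and the Chebyshev step avoids the combinatorics of $e_k$ entirely. So the two proofs are not just cosmetically different.

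However, there is a genuine gap in your argument. You claim that any contributing $k$-subset must meet class (a) or (b), ``and that intersection supplies a factor $\lesssim t^{(\ln 2)/\ln K}$.'' This is false in the regime where $F_i(q^-)$ is close to $1$ but strictly less than $1$. Iterating \eqref{cdf specific conditions} gives $F_i(tq)\le F_i(q^-)/\bigl(2^m(1-F_i(q^-))+F_i(q^-)\bigr)$; if $1-F_i(q^-)\approx 2^{-j}$ with $j$ close to $m$ or larger, the right-hand side is not small --- it can be arbitrarily close to $1$ (e.g.\ $1-F_i(q^-)=1/(2k)$ gives $F_i(tq)\approx 1-2^{m-1}/k$ for $k\gg 2^m$). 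So a single ``controlled'' factor does not supply the decay. In this near-saturated regime the decay must instead come from the product of many factors each slightly less than $1$, together with the fact that the quantile constraint $\sum_i F_i(q^-)\le k-1/2$ limits how many such indices there can be relative to their distance from $1$. This interaction between the multiplicity of near-$1$ indices, the binomial multiplicity in the expansion of $e_k$, and the size of $1-F_i(q^-)$ is exactly the heart of the estimate, and your sketch waves it away with ``a careful dyadic summation collapses the bound.'' Some explicit check (generating-function bound $e_k\le x^{-k}\prod_i(1+xF_i(tq))$ with an optimized $x$, tracking the saturated and near-saturated counts against the quantile constraint) suggests the strategy can likely be made to work, but as written the key step is unjustified, and it is not clear the clean exponents $1/(4\ln K)$ and $1/(6\ln K)$ emerge without a more delicate analysis than indicated. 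By contrast, the paper sidesteps this difficulty completely: on $I^c$ it passes to the complementary count $\sum_{i\in I^c}\chi_{\{X_i>t_0\}}$ and applies Chebyshev around its mean, using \eqref{gamma-two} to compare $\sum(1-F_i(t_0))$ with $\sum(1-F_i(q))$ --- no product-of-near-ones estimate is needed.

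Your deduction of the median bounds from the two tail estimates (substituting $t=K^{-10}$ and $t=K^{13}$) is fine and matches the paper in spirit.
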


In the proof of the theorem, we will use two following auxiliary statements.
\begin{lem}
Let $F:(0, \infty) \to [0, 1]$ be a non-decreasing function satisfying \eqref{cdf specific conditions}.
Let $\ell \geq 1$, $\gamma\in (0,1)$, and $t>0$.
Then
\begin{equation}\label{gamma-one}
  F(t) \geq 2^{\ell} (1-F(t)) \, F\left(t/K^{\ell}\right)
\end{equation}
and, assuming that $F(t)\geq 1-\gamma$,
\begin{equation}\label{gamma-two}
  1-F\left(t/K^{\ell}\right) \geq \frac{2^\ell}{2^\ell \gamma +1} \, (1-F(t)) .
\end{equation}
\end{lem}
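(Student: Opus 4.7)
The plan is to derive both inequalities from a single cross-multiplied form obtained by iterating the hypothesis \eqref{cdf specific conditions}.

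First I would iterate \eqref{cdf specific conditions} $\ell$ times (starting at $t$, then $Kt$, $K^2t$, etc.) to obtain
$$
\frac{F(K^\ell s)}{1-F(K^\ell s)} \;\geq\; 2^\ell\,\frac{F(s)}{1-F(s)}
\qquad\text{for every } s>0.
$$
Substituting $s = t/K^\ell$ gives $F(t)/(1-F(t)) \geq 2^\ell F(t/K^\ell)/(1-F(t/K^\ell))$, and cross-multiplying yields the working identity
$$
F(t)\bigl(1-F(t/K^\ell)\bigr) \;\geq\; 2^\ell\,\bigl(1-F(t)\bigr)\, F(t/K^\ell). \qquad(\ast)
$$

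For \eqref{gamma-one} I would just throw away the factor $1-F(t/K^\ell) \leq 1$ on the left-hand side of $(\ast)$: since this factor lies in $[0,1]$, the left-hand side is bounded above by $F(t)$, and \eqref{gamma-one} is immediate.

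For \eqref{gamma-two} the argument is purely algebraic. Write $a = 1-F(t/K^\ell)$ and $b = 1-F(t)$, so that $(\ast)$ becomes $(1-b)\, a \geq 2^\ell\, b\,(1-a)$. Solving for $a$ gives
$$
a \;\geq\; \frac{2^\ell\, b}{1+(2^\ell-1)b}.
$$
The only step requiring the hypothesis $F(t)\geq 1-\gamma$ is the final one: since $b\leq\gamma$, the denominator satisfies $1+(2^\ell-1)b \leq 1+2^\ell\gamma$, and substituting this bound produces precisely \eqref{gamma-two}.

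There is no serious obstacle here; the only subtle point is that one must be careful to bound the denominator $1+(2^\ell-1)b$ using $b\leq\gamma$ (and not, say, by $1$, which would go the wrong way). The rest is straightforward manipulation of the iterated hypothesis.
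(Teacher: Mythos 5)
Your proof is correct, and for \eqref{gamma-one} it matches the paper exactly: iterate \eqref{cdf specific conditions} $\ell$ times and drop the factor $1-F(t/K^\ell)\leq 1$. For \eqref{gamma-two}, however, you take a genuinely different route. The paper fixes a threshold $\beta\in(0,1)$ and argues by cases depending on whether $F(t/K^\ell)\geq\beta$ or $F(t/K^\ell)<\beta$: in the first case the iterated inequality gives $1-F(t/K^\ell)\geq 2^\ell\beta\,(1-F(t))$, in the second case one uses $1-F(t/K^\ell)\geq 1-\beta$ together with $1-F(t)\leq\gamma$, and the choice $\beta=1/(2^\ell\gamma+1)$ makes both lower bounds coincide with the stated one. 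You instead set $a=1-F(t/K^\ell)$, $b=1-F(t)$, rewrite the cross-multiplied iterate as $(1-b)a\geq 2^\ell b(1-a)$, solve directly for $a$ to get $a\geq 2^\ell b/\bigl(1+(2^\ell-1)b\bigr)$, and only then invoke $b\leq\gamma$ to bound the denominator. Your route is more systematic (no parameter to optimize, no case split) and actually yields the marginally sharper intermediate bound with denominator $1+(2^\ell-1)\gamma$ in place of $1+2^\ell\gamma$; the paper's route buys a slightly cleaner split into two readable one-line cases at the cost of the threshold optimization. Both are elementary and both start from the same iterated inequality, so the difference is purely one of bookkeeping.
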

\begin{proof}
Applying \eqref{cdf specific conditions} $\ell$ times we obtain
$$
   \frac{F(t)}{1-F(t)} \geq \frac{2^\ell \, F(t/K^\ell)}{1-F(t/K^\ell)},
$$
which implies \eqref{gamma-one}.
Fix a parameter $\beta\in (0,1)$, which will be specified later. If
$F\left(t/K^{\ell}\right)\geq \beta$ then the above inequality implies
$$
     1-F\left(t/K^{\ell}\right) \geq 2^\ell\, \beta \, (1-F(t)) .
$$
Otherwise, if $F\left(t/K^{\ell}\right)< \beta$, we get
$$
  1-F\left(t/K^{\ell}\right) \geq 1-\beta \geq \frac{1-\beta}{\gamma}(1-F(t)).
$$
Choosing $\beta:= 1/(2^\ell \gamma +1)$, we get \eqref{gamma-two} and complete the proof.
\end{proof}

\bigskip

The next  simple lemma can be verified by considering
the expectation and the variance
of the sum of random Bernoulli variables and using the Chebyshev inequality.

\begin{lem}\label{bernoulli concentration lemma}
Let $\eta_1,\ldots,\eta_n$ be independent Bernoulli $0/1$ random variables with probabilities of success
$p_1,p_2,\dots,p_n$. Then for every $t>0$ we have
$$
   \PP\Bigl\{\Bigl|\sum\limits_{i=1}^n \eta_i-\sum\limits_{i=1}^n p_i\Bigr|
   \geq t\Bigr\}\leq\frac{1}{t^2}\, \sum\limits_{i=1}^n p_i.
$$
\end{lem}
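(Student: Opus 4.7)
The plan is a direct application of Chebyshev's inequality, exactly as hinted in the paragraph preceding the lemma. First I would set $S := \sum_{i=1}^n \eta_i$ and compute its mean by linearity of expectation, $\E S = \sum_{i=1}^n p_i$, so the event in question is the deviation event $\{|S - \E S| \geq t\}$.

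Next I would compute the variance. Because the $\eta_i$ are independent, variances add:
$$
 \mathrm{Var}(S) \;=\; \sum_{i=1}^n \mathrm{Var}(\eta_i) \;=\; \sum_{i=1}^n p_i (1-p_i).
$$
The only substantive observation to make is the trivial inequality $p_i(1-p_i)\leq p_i$, valid because each $p_i \in [0,1]$, which upgrades the exact variance to the clean bound $\mathrm{Var}(S)\leq \sum_{i=1}^n p_i$. This slack is what lets the right-hand side of the target inequality be stated in terms of $\sum p_i$ rather than the sharper quantity $\sum p_i(1-p_i)$.

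Finally I would invoke Chebyshev's inequality in the form $\PP\{|S-\E S|\geq t\}\leq \mathrm{Var}(S)/t^2$ and combine it with the variance bound above to conclude
$$
 \PP\Bigl\{\Bigl|\sum_{i=1}^n \eta_i - \sum_{i=1}^n p_i\Bigr|\geq t\Bigr\}\;\leq\; \frac{1}{t^2}\sum_{i=1}^n p_i.
$$
There is no genuine obstacle; the statement is an elementary consequence of the independence of the $\eta_i$ together with the pointwise inequality $p_i(1-p_i)\leq p_i$. The only mild care needed is to note that the bound is stated with $\sum p_i$ rather than $\sum p_i(1-p_i)$, which is exactly the (lossy but convenient) estimate made above.
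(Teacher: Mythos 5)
Your proof is correct and follows exactly the route the paper indicates (Chebyshev's inequality applied to the sum of independent Bernoulli variables, with the bound $\mathrm{Var}(S)=\sum_i p_i(1-p_i)\leq \sum_i p_i$). Nothing to add.
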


\medskip

\begin{proof}[Proof of Theorem~\ref{median estimation indep}.]
We start with the first bound.
Take any positive $q<q_F\big(\frac{k-1/2}{n}\big)$. By definition of the quantile, we have
$\sum_{i=1}^n F_i(q)\leq k-1/2.$
To estimate $\km_{i \leq n} X_i$ from below it is enough to show
that the set of indices $i$ corresponding to ``small'' $X_i$'s has cardinality at most $k-1$.

\smallskip

Fix $\ell \geq 5$ such that $K^{-\ell -1}\leq t <K^{-\ell}$ and put $\gamma := 1/2^{\ell/2}$, $t_0:=q/K^\ell$.
Further, set
$$
   I:=  \{i\leq n \, : \, F_i(q) < 1-\gamma\} \quad \mbox{ and } \quad
   I^c:=  \{i\leq n \, : \, F_i(q) \geq 1-\gamma\}.
$$
We want to estimate the number of indices $i\in I$ corresponding to ``small'' $X_i$.
Denote
$$
   A:=\sum_{i\in I}F_i(q) \quad \mbox{ and } \quad B:=\sum_{i\in I}F_i(t_0).
$$
Applying \eqref{gamma-one} to $F_i$, $i\in I$, we get that
$A\geq 2^\ell \gamma  B$. Therefore,
if $A\leq 2^\ell \, \gamma/(2^\ell \, \gamma - 1)$ then
$$
  \PP\left\{\exists i\in I \, :\, X_i\leq t_0\right\} \leq
  \sum _{i\in I} \PP\left\{X_i\leq t_0\right\}  = B
  \leq \frac{A}{2^\ell \, \gamma } \leq \frac{1}{2^\ell \, \gamma -1 }.
$$
If $A> 2^\ell \, \gamma/(2^\ell \, \gamma - 1)$ then,
applying  Lemma~\ref{bernoulli concentration lemma}, we get
\begin{align*}
\PP\big\{|\{i\in I:\,X_i\leq t_0\}|< A\big\}
&=\PP\Big\{ \sum_{i\in I} \chi _{\{X_i\leq t_0\}} < A\Big\}\\
&\geq 1- \frac{B}{(A-B)^2}\\
&\geq 1- \frac{2^\ell \, \gamma}{A(2^\ell \, \gamma - 1)^2}\\
&\geq 1- \frac{1}{2^\ell \, \gamma -1 }.
\end{align*}
Thus, in both cases we have
\begin{equation}\label{median indep 1}
\PP\Bigl\{|\{i\in I:\,X_i\leq t_0\}|< \sum_{i\in I}F_i(q)\Bigr\}\geq 1- \frac{1}{2^\ell \, \gamma -1 }.
\end{equation}

Next, we estimate the number of indices $i\in I^c$ corresponding to ``small'' $X_i$'s.
If $a:=\sum_{i\in I^c}(1-F_i(q))< 1/2$,
then we have
$$
  |I^c|< \frac{1}{2} + \sum_{i\in I^c}F_i(q) \leq  k-\sum_{i\in I}F_i(q),
$$
and from \eqref{median indep 1} we obtain
$$
 \PP\Bigl\{|\{i\leq n :\,X_i\leq t_0\}|< k \Bigr\}\geq 1- \frac{1}{2^\ell \, \gamma -1 }.
$$
Now, assume that $a \geq 1/2$. Set $b:=\sum_{i\in I^c}(1-F_i(t_0))$.
Applying \eqref{gamma-two} to $F_i$, $i\in I^c$, we get
$b\geq \frac{2^\ell}{2^\ell \gamma +1 } a$. Note that $\sum _{i\in I^c} F_i(q) = |I^c|-a$.
Therefore, by Lemma~\ref{bernoulli concentration lemma} we obtain
\begin{align*}
\PP\Big\{|\{i\in I^c:\, X_i \leq t_0\}|< \sum _{i\in I^c} F_i(q) \Big\}
&= \PP\Big\{\sum_{i\in I^c} \chi _{\{X_i\leq t_0\}} < |I^c|-a \Big\}\\
&= \PP\Big\{b-\sum_{i\in I^c} \chi _{\{X_i> t_0\}} < b-a \Big\}\\
&\geq 1 - \frac{b}{(b-a)^2}\\
&\geq 1-  \frac{2\cdot 2^\ell (2^\ell \gamma +1 )}{(2^\ell -2^\ell \gamma - 1)^2}.
\end{align*}
Combining the last relation with \eqref{median indep 1} and using that $\sum_{i\leq n} F_i(q) \leq k-1/2<k$,  we obtain
$$
  \PP\Bigl\{|\{i\leq n :\,X_i\leq t_0\}|< k \Bigr\}\geq 1- \frac{1}{2^\ell \, \gamma -1 } -
   \frac{2\cdot 2^\ell (2^\ell \gamma +1 )}{(2^\ell -2^\ell \gamma -1)^2}  \geq 1- \frac{4}{2^{\ell/2}},
$$
where in the last inequality we used the assumption $\ell \geq 5$ and the identity $\gamma = 1/2^{\ell/2}$. This proves
$$
  \PP\Bigl\{\km _{1\leq i\leq n} X_i\leq q/K^\ell \Bigr\} \leq  \frac{4}{2^{\ell/2}}.
$$
Finally, by the choice of $\ell$ we have $\ell \geq (4\ln (1/t))/(5\ln K)$,
which implies the first part of the theorem.

\bigskip

The second part is somewhat similar.
To make comparison with the first part of the proof straightforward, we will use the same letters
for corresponding sets or numbers, just adding a bar.
Let $\bar q:=q_F\left(\frac{k-1/2}{n}\right)$.
By definition, we have
$\sum_{i=1}^n F_i(\bar q)\geq k-1/2.$
To estimate $\km_{i \leq n} X_i$ from above we will show that
the set of indices $i$ corresponding to ``small'' $X_i$ typically has cardinality at least $k$.
Fix $\bar \ell \geq 5$ such that
such that $K^{\bar \ell}\leq t <K^{\bar \ell+1}$, and set $\bar \gamma := 2/4^{\bar \ell/3}$, $\bar t_0:=K^{\bar \ell} \bar q$.
Further, let
$$
   \bar I:=\{i\leq n \, : \, F_i(\bar t_0) < 1-\bar \gamma\} \quad \mbox{ and } \quad
   \bar I^c:=\{i\leq n \, : \, F_i(\bar t_0) \geq 1-\bar \gamma\}.
$$
Let us bound the number of indices $i\in \bar I$ corresponding to ``small'' $X_i$.
Denote
$$
   \bar A:=\sum_{i\in \bar I}F_i(\bar q) \quad \mbox{ and } \quad \bar B:=\sum_{i\in \bar I}F_i(\bar t_0).
$$
Assume that $\bar A\geq 1/2$.
Applying  Lemma~\ref{bernoulli concentration lemma}, we get
\begin{align*}
\PP\big\{|\{i\in \bar I:\,X_i\leq \bar t_0\}| >  \bar A\big\} &= \PP\Big\{ \sum_{i\in \bar I} \chi _{\{X_i\leq \bar t_0\}} > \bar A\Big\}\\
&=\PP\Big\{\bar B - \sum_{i\in \bar I} \chi _{\{X_i\leq \bar t_0\}} < \bar B- \bar A\Big\}\\
&\geq 1- \frac{\bar B}{(\bar B-\bar A)^2}\\
&\geq 1- \frac{2^{\bar \ell +1} \, \bar \gamma}{(2^{\bar \ell} \, \bar \gamma - 1)^2},
\end{align*}
where we used the estimate $\bar B\geq 2^{\bar \ell}\bar \gamma\bar A$,
which follows from \eqref{gamma-one}.
Thus, in both cases $\bar A< 1/2$ and $\bar A\geq 1/2$ we have
\begin{equation}\label{median indep 5}
    \PP\Bigl\{|\{i\in \bar I:\,X_i\leq\bar t_0\}|> \sum_{i\in \bar I}F_i(\bar q)-1/2 \Bigr\}
     \geq 1- \frac{2^{\bar \ell +1} \, \bar \gamma}{(2^{\bar \ell} \, \bar \gamma - 1)^2}.
\end{equation}

Next, we estimate the number of indices $i\in \bar I^c$ corresponding to ``small'' $X_i$'s. Fix
$$
 \lambda:= \sqrt{2^{\bar \ell} (2^{\bar \ell} \bar \gamma +1)}/(2^{\bar \ell} - 2^{\bar \ell}\bar \gamma -1) < 1.
$$
If $\bar a:=\sum_{i\in \bar I^c}(1-F_i(\bar q))< \lambda$, then
$
  |\bar I^c|< \lambda + \sum_{i\in \bar I^c}F_i(\bar q) .
$
In this situation we have
$$
  |\{i\in \bar I^c:\, X_i \leq \bar t_0\}|\geq  \sum _{i\in \bar I^c} F_i(\bar q)
$$
if and only if $X_i \leq \bar t_0$ for all $i\in \bar I^c$. Note also that for every
$a_1, \ldots, a_m\in (0,1]$ one has
$$
  \mbox{ if } \quad \sum _{i=1}^m a_i \geq m-\lambda \quad
 \mbox{ then } \quad
  \prod _{i=1}^m a_i \geq 1- \lambda.
$$
This and independence of $X_i$'s imply
$$
  \PP\Big\{|\{i\in \bar I^c:\, X_i \leq \bar t_0\}|\geq  \sum _{i\in \bar I^c} F_i(\bar q) \Big\} =
   \prod _{i\in \bar I^c} \PP\big\{X_i\leq \bar t_0\big\}
  \geq \prod _{i\in \bar I^c} \PP\big\{X_i\leq \bar q\big\}
  \geq 1- \lambda.
$$
Together with \eqref{median indep 5}, it gives
$$
  \PP\Bigl\{|\{i\leq n :\,X_i\leq \bar t_0\}|> \sum _{i=1}^n F_i(\bar q) -1/2 \Bigr\}
  \geq 1-  \frac{2^{\bar \ell +1} \, \bar \gamma}{(2^{\bar \ell} \, \bar \gamma - 1)^2} - \lambda.
$$
It remains to consider the case $\bar a \geq \lambda$. Set
$$
  \bar b:=\sum_{i\in \bar I^c}(1-F_i(\bar t_0)).
$$
Applying \eqref{gamma-two} to $F_i$, $i\in \bar I^c$, we get that
$\bar a\geq \frac{2^{\bar \ell}}{2^{\bar \ell} \bar \gamma +1 } \bar b$. Note that $\sum _{i\in \bar I^c} F_i(\bar q) = |\bar I^c|-\bar a$.
Therefore, by Lemma~\ref{bernoulli concentration lemma}, we obtain
\begin{align*}
\PP\Big\{|\{i\in \bar I^c:\, X_i \leq \bar t_0\}|> \sum _{i\in \bar I^c} F_i(\bar q) \Big\}
&=\PP\Big\{\sum_{i\in \bar I^c} \chi _{\{X_i\leq \bar t_0\}} > |\bar I^c|-\bar a \Big\}\\
&=\PP\Big\{\sum_{i\in \bar I^c} \chi _{\{X_i> \bar t_0\}}-\bar b < \bar a - \bar b \Big\}\\
&\geq 1 - \frac{\bar b}{(\bar a-\bar b)^2}\\
&\geq 1-\frac{2^{\bar \ell} (2^{\bar \ell} \bar \gamma +1 )}{(2^{\bar \ell} -2^{\bar \ell} \bar \gamma - 1)^2\, \lambda}.
\end{align*}
Combining this with \eqref{median indep 5},
we obtain
$$
  \PP\Bigl\{|\{i\leq n :\,X_i\leq \bar t_0\}|> \sum _{i=1}^n F_i(\bar q) -1/2 \Bigr\}\geq
    1- \frac{2^{\bar \ell +1} \, \bar \gamma}{(2^{\bar \ell} \, \bar \gamma - 1)^2}-\frac{2^{\bar \ell}
 (2^{\bar \ell} \bar \gamma +1 )}{(2^{\bar \ell} -2^{\bar \ell} \bar \gamma - 1)^2\, \lambda}.
$$
Since $\bar \ell \geq 5$ and in view of the definitions of $\bar \gamma$ and $\lambda$, in both cases
$\bar a< \lambda$ and $\bar a\geq \lambda$
one has
$$
   \PP\Bigl\{|\{i\leq n :\,X_i\leq \bar t_0\}|> \sum _{i=1}^n F_i(\bar q) -1/2 \Bigr\}\geq
    1- \frac{4}{2^{\bar \ell/3}}.
$$
Note that $\sum _{i=1}^n F_i(\bar q) -1/2 \geq k-1$, thus the last estimate implies
$$
  \PP\Bigl\{\km _{1\leq i\leq n} X_i >  K^{\bar \ell} \, \bar q \Bigr\} \leq  \frac{4}{2^{\bar \ell/3}}.
$$
Finally, observe that by the choice of $\bar \ell $ we have $\bar\ell \geq (4\ln t)/(5\ln K)$,
which implies the second estimate in the theorem.
\end{proof}

\smallskip

\noindent
A. E. Litvak and K. Tikhomirov\\  {\small Dept.\ of Math.\ and Stat.\ Sciences},\\
{\small University of Alberta}, {\small Edmonton, AB, Canada T6G 2G1},\\
{\small \tt  aelitvak@gmail.com}\\
{\small \tt  ktikhomi@ualberta.ca}

\smallskip
\noindent {\small Current address of K.T.: Dept.\ of Math., Fine Hall, Princeton, NJ 08544}

\end{document}